\newtheorem{theorem}{Theorem}[section]
\newtheorem{corollary}[theorem]{Corollary}
\newtheorem{proposition}[theorem]{Proposition}
\theoremstyle{definition}
\newtheorem{definition}[theorem]{Definition}
\newtheorem{remark}[theorem]{Remark}
\newcommand{\N}{\mathbb{N}}
\newcommand{\R}{\mathbb{R}}
\newcommand{\LL}{\mathcal{L}}
\newcommand{\CC}{\mathscr{C}}
\newcommand{\W}{\textrm W}
\renewcommand{\L}{\textrm{L}}
\newcommand{\E}{\textrm E}
\newcommand{\Ir}{{_tI_b^\alpha}}
\newcommand{\IlC}{{_aI_t^{1-\alpha}}}
\newcommand{\IrC}{{_tI_b^{1-\alpha}}}
\newcommand{\Dl}{{_a^CD_t^\alpha}}
\newcommand{\Dr}{{_t^CD_b^\alpha}}
\newenvironment{keywords}{\begin{center}
\begin{minipage}[c]{11.2cm}
{\bf Keywords:}} {\end{minipage}
\end{center}}
\newenvironment{msc}{\begin{center}
\begin{minipage}[c]{11.2cm}
{\bf 2010 Mathematics Subject Classification:}}
{\end{minipage}
\end{center}}
\begin{document}

\title{Duality for the left and right\\
fractional derivatives\thanks{This is a preprint of a paper whose final 
and definite form will appear in the international journal \emph{Signal Processing}, 
ISSN 0165-1684. Paper submitted Dec/2013; revised Apr, July and Sept 2014; 
accepted for publication 18/Sept/2014.}}

\author{M. Cristina Caputo$^{1}$\\
\texttt{caputo@utexas.edu}
\and Delfim F. M. Torres$^{2}$\\
\texttt{delfim@ua.pt}}

\date{$^{1}$Department of Mathematics\\
The University of Texas at Austin\\
Austin, TX 78712-1202, USA\\[0.3cm]
$^{2}$CIDMA, Department of Mathematics\\
University of Aveiro\\
3810-193 Aveiro, Portugal}


\maketitle


\begin{abstract}
We prove duality between the left and right fractional derivatives,
independently on the type of fractional operator.
Main result asserts that the right derivative of a function is the dual
of the left derivative of the dual function or, equivalently,
the left derivative of a function is the dual
of the right derivative of the dual function.
Such duality between left and right fractional operators
is useful to obtain results for the left operators
from analogous results on the right operators and vice versa.
We illustrate the usefulness of our duality theory
by proving a fractional integration by parts formula
for the right Caputo derivative and by proving
a Tonelli-type theorem that ensures the existence
of minimizer for fractional variational problems
with right fractional operators.
\end{abstract}

\begin{msc}
26A33, 49J05.
\end{msc}

\begin{keywords}
fractional derivatives and integrals, duality theory,
fractional integration by parts, calculus of variations,
existence of solutions.
\end{keywords}


\section{Introduction}

Differential equations of fractional order appear in many branches of physics, mechanics
and signal processing. Roughly speaking, fractional calculus deals with derivatives and integrals of noninteger order.
The subject is as old as the calculus itself. In a letter correspondence of 1695,
L'Hopital proposed the following problem to Leibniz: ``Can the meaning of derivatives
with integer order be generalized to noninteger orders?'' Since then,
several mathematicians studied this question,
among them Liouville, Riemann, Weyl and Letnikov.
An important issue is that the fractional derivative of order $\alpha$ at a point $x$
is a local property only when $\alpha$ is an integer. For noninteger cases,
the fractional derivative at $x$ of a function $f$ is a nonlocal operator,
depending on past values of $f$ (left derivatives) or future values of $f$
(right derivatives). In physics, if $t$ denotes the time-variable, the right
fractional derivative of $f(t)$ is interpreted as a future state of the
process $f(t)$. For this reason, the right-derivative is usually neglected in
applications, when the present state of the process does not depend on the
results of the future development. However, right-derivatives are unavoidable
even in physics, as well illustrated by the fractional variational calculus
\cite{book:Baleanu,book:FCV}. Consider a variational principle,
which gives a method for finding signals that minimize or maximize
the value of some quantity that depend upon those signals.
Two different approaches in formulating differential equations
of fractional order are then possible: in the
first approach, the ordinary (integer order) derivative in a differential equation
is simply replaced by the fractional derivative. In the second approach,
one modifies the variational principle by replacing the integer order derivative
by a fractional one. Then, minimization of the action leads
to the differential equation of the system. This second
approach is considered to be, from the standpoint of applications, the more sound one:
see, e.g., \cite{MR2332922,MR2898042}.
It turns out that this last approach introduces necessarily
right derivatives, even when they are not present in the formulation
and data of the problems: see, e.g., \cite{MR2752428,MR2943359}. Indeed,
left and right derivatives are linked by the following
integration by parts formula:
$$
\int_a^b d^+ f \cdot g \, dt =- \int_a^b f\cdot d^- g \, dt  ,
$$
over the set of functions $f$ and $g$ admitting right
and left derivatives, here generally represented by
$d^+$ and $d^-$ respectively,
and such that $f(a) g(a) =f(b)g(b)=0$
(cf. Corollary~\ref{cor4.1} and Remark~\ref{rem4.2}).
Our duality results provide a very elegant way to deal with
such fractional problems, where there exists an interplay between
left and right fractional derivatives of the signals.

There are many fields of applications where one can use the fractional calculus \cite{Machado}.
Examples include viscoelasticity, electrochemistry, diffusion processes,
control theory, heat conduction, electricity, mechanics,
chaos and fractals (see, \textrm{e.g.},
\cite{Blutzer,Jonsson,Lorenzo,Mainardi,Miller,Podlubny,Sabatier,Samko}).
A large (but not exhaustive) bibliography
on the use of fractional calculus in linear viscoelasticity
may be found in the book \cite{MR2676137}. Recently, a lot of attention
has been put on the fractional calculus of variations
(see, \textrm{e.g.}, \cite{AGRA3,Almeida2,Atanackovic,Dreisigmeyer1,%
Dreisigmeyer2,Frederico:Torres07,Klimek1,Klimek2,Muslih1,Muslih2,MyID:207,Riewe1,Riewe2}).
We also mention \cite{Almeida1}, were necessary and sufficient conditions of optimality
for functionals containing fractional integrals and fractional derivatives are presented.
For results on fractional optimal control see, e.g., \cite{AGRA0,Frederico:Torres08,PATFree}.
In the present paper we work mainly with the Caputo fractional derivative.
For problems of calculus of variations with boundary conditions,
Caputo's derivative seems to be more natural because,
for a given function $y$ to have continuous Riemann--Liouville fractional derivative
on a closed interval $[a,b]$, the function must satisfy the conditions $y(a)=y(b)=0$ \cite{Almeida2}.
We also mention that, if $y(a)=0$, then the left Riemann--Liouville derivative of $y$ of order
$\alpha \in (0,1)$ is equal to the left Caputo derivative;
if $y(b)=0$, then the right Riemann--Liouville derivative of $y$ of order
$\alpha \in (0,1)$ is equal to the right Caputo derivative.

The paper is organized as follows. In Section~\ref{sec2} we
present the definitions of fractional calculus needed in the sequel.
Section~\ref{sec3} is dedicated to our original results: we introduce
a duality theory between the left and right fractional operators.
It turns out that the duality between
the left and right fractional derivatives or integrals
is independent of the type of fractional operator:
a right operator applied to a function can always be computed as the dual
of the left operator evaluated on the dual function or, equivalently,
we can compute the left operator applied to a function as the dual
of the right operator evaluated on the dual function. We claim
that such duality is very useful, allowing one to
directly obtain results for the right operators
from analogous results on the left operators, and vice versa.
This fact is illustrated in Section~\ref{sec4}, where we
show the usefulness of our duality theory in the fractional calculus of variations.
Due to fractional integration by parts, differential equations
containing right derivatives are common in the fractional variational theory
even when they are not present in the data of the problems. Here
we use our duality argument to obtain a fractional integration by parts formula
for the right Caputo derivative (Section~\ref{sub:sec:IBP});
and we show conditions assuring the existence of minimizers
for fractional variational problems with right fractional operators
(Section~\ref{sub:sec:VP}). We end with Section~\ref{sec:conc} of conclusions.

Many different dualities are available in the literature. Indeed,
duality is an important general theme that has manifestations
in almost every area of mathematics, with numerous different meanings.
One can say that the only common characteristic of duality, between those different meanings,
is that it translates concepts, theorems or mathematical structures into other concepts,
theorems or structures, in a one-to-one fashion. For example,
\cite{Caputo} introduces the concept of duality between two different
approaches to time scales: the delta approach, based on the forward
$\sigma$ operator, and the nabla approach, based on the backward $\rho$ operator
\cite{MR1843232}. There is, however, no direct connection between
the time-scale calculus considered in \cite{Caputo},
which is a theory for unification of difference equations (of integer order)
with that of differential equations (of integer order), and the
the fractional (noninteger order) calculus now considered.
We are not aware of any published work on the concept
of duality as we do here.


\section{Brief review on fractional calculus}
\label{sec2}

There are several definitions of fractional derivatives and fractional integrals,
like Riemann--Liouville, Caputo, Riesz, Riesz--Caputo, Weyl, Grunwald--Letnikov,
Hadamard, Chen, etc. We present the definitions of the first two of them.
Except otherwise stated, proofs of results may be found in \cite{Kilbas}
(see also \cite{book:Ortigueira}).

Let $f:[a,b]\rightarrow\mathbb{R}$ be a function, $\alpha$ a positive real number,
$n$ the integer satisfying $n-1 < \alpha \le n$, and $\Gamma$ the Euler gamma function.
Then, the left Riemann--Liouville fractional integral of order $\alpha$ is defined by
$$
{_aI_x^\alpha}f(x)=\frac{1}{\Gamma(\alpha)}\int_a^x (x-t)^{\alpha-1}f(t)dt
$$
and the right Riemann--Liouville fractional integral of order $\alpha$ is defined by
$$
{_xI_b^\alpha}f(x)=\frac{1}{\Gamma(\alpha)}\int_x^b(t-x)^{\alpha-1} f(t)dt.
$$
The left and right Riemann--Liouville fractional derivatives of
order $\alpha$ are defined, respectively, by
$$
{_aD_x^\alpha}f(x)=\frac{d^n}{dx^n} {_aI_x^{n-\alpha}}f(x)
=\frac{1}{\Gamma(n-\alpha)}\frac{d^n}{dx^n}\int_a^x (x-t)^{n-\alpha-1}f(t)dt
$$
and
$$
{_xD_b^\alpha}f(x)=(-1)^n\frac{d^n}{dx^n} {_xI_b^{n-\alpha}}f(x)
=\frac{(-1)^n}{\Gamma(n-\alpha)}\frac{d^n}{dx^n}
\int_x^b (t-x)^{n-\alpha-1} f(t)dt.
$$
The left and right Caputo fractional derivatives of order $\alpha$
are defined, respectively, by
$$
{_a^CD_x^\alpha}f(x) = {_aI_x^{n-\alpha}}\frac{d^n}{dx^n} f(x)
=\frac{1}{\Gamma(n-\alpha)}\int_a^x (x-t)^{n-\alpha-1}f^{(n)}(t)dt
$$
and
$$
{_x^CD_b^\alpha}f(x)=(-1)^n{_xI_b^{n-\alpha}}\frac{d^n}{dx^n} f(x)
=\frac{1}{\Gamma(n-\alpha)}\int_x^b(-1)^n (t-x)^{n-\alpha-1} f^{(n)}(t)dt.
$$

\begin{remark}
\label{rem:2.1}
When $\alpha=n$ is integer, the fractional operators reduce to:
\begin{equation*}
\begin{split}
{_aI_x^{n}}f(x)
&= \int_a^x d\tau_1 \int_a^{\tau_1} d\tau_2 \ldots \int_a^{\tau_{n-1}}f(\tau_n) d\tau_n,\\
{_xI_b^{n}}f(x)
&= \int_x^b d\tau_1 \int_{\tau_1}^b d\tau_2 \ldots \int_{\tau_{n-1}}^b f(\tau_n) d\tau_n,\\
{_aD_x^{n}}f(x) &= {_a^CD_x^{n}}f(x)  \, =  \, f^{(n)}(x),\\
{_xD_b^{n}}f(x) &= {_x^CD_b^{\alpha}}f(x) \, = \, (-1)^n f^{(n)}(x).
\end{split}
\end{equation*}
\end{remark}

There exists a relation between the Riemann--Liouville
and the  Caputo fractional derivatives:
$$
{_a^CD_x^\alpha}f(x)={_aD_x^\alpha}f(x)-\sum_{k=0}^{n-1}
\frac{f^{(k)}(a)}{\Gamma(k-\alpha+1)}(x-a)^{k-\alpha}
$$
and
$$
{_x^CD_b^\alpha}f(x)={_xD_b^\alpha}f(x)
-\sum_{k=0}^{n-1}\frac{f^{(k)}(b)}{\Gamma(k-\alpha+1)}(b-x)^{k-\alpha}.
$$
Therefore,
$$
\mbox{if } f(a)=f'(a)=\cdots=f^{(n-1)}(a)=0,
\mbox{ then } {_a^CD_x^\alpha}f(x)={_aD_x^\alpha}f(x)
$$
and
$$
\mbox{if } f(b)=f'(b)=\cdots=f^{(n-1)}(b)=0,
\mbox{ then } {_x^CD_b^\alpha}f(x)={_xD_b^\alpha}f(x).
$$
These fractional operators are linear, \textrm{i.e.},
$$
\mathcal{P} (\mu f(x)+\nu g(x))
=\mu\, \mathcal{P}f(x)+\nu \,\mathcal{P}g(x),
$$
where $\mathcal{P}$ is ${_aD_x^\alpha}$, ${_xD_b^\alpha}$, ${_a^CD_x^\alpha}$,
${_x^CD_b^\alpha}$, ${_aI_x^\alpha}$ or ${_xI_b^\alpha}$,
and $\mu$ and $\nu$ are real numbers.

If $f \in C^n[a,b]$, then the left and right Caputo derivatives are continuous on $[a,b]$.
The main advantage of Caputo's approach is that the initial conditions for fractional
differential equations with Caputo derivatives take on the same
form as for integer-order differential equations.


\section{Duality of the left and right derivatives}
\label{sec3}

We show that there exists a duality between
the left and right fractional operators.
We begin by introducing the notion of dual function.

\begin{definition}[Dual function]
\label{def:dualF}
Let $f:[a,b]\rightarrow\mathbb{R}$.
Then its dual function, denoted by $f^{\star}$,
is defined as $f^{\star}:[-b,-a]\rightarrow\mathbb{R}$ by
$$
f^{\star}(x)=f(-x).
$$
\end{definition}

\begin{remark}
\label{rem:dual:dual}
A direct consequence of the definition of dual function is that
$$
f^{\star\star}(x)=f(x).
$$
\end{remark}

The next result asserts that the left fractional integral of a function $f$
is the right fractional integral for its dual function $f^{\star}$.

\begin{theorem}[Duality of the left and right fractional integrals]
\label{fractional_duality}
Let $f:[a,b]\rightarrow\mathbb{R}$ be a function
and $\alpha$ a positive real number. Then,
$$
{_aI_x^\alpha}f(x)={_{-x}I_{-a}^\alpha}f^{\star}(-x).
$$
\end{theorem}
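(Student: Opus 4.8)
The plan is to prove the identity by a direct computation: expand the right-hand side using the definition of the right Riemann--Liouville fractional integral applied to the dual function $f^{\star}$, then perform the change of variables that reflects the integration interval about the origin, and finally recognize the resulting expression as the left fractional integral of $f$.

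First I would write out explicitly the right fractional integral of the dual function evaluated at the point $-x$. Since $f^{\star}$ is defined on $[-b,-a]$, the right operator ${_{-x}I_{-a}^\alpha}$ has the running point $-x$ as its lower limit and $-a$ as its upper limit, and its kernel is $(t-(-x))^{\alpha-1}=(t+x)^{\alpha-1}$, so that
$$
{_{-x}I_{-a}^\alpha}f^{\star}(-x)
=\frac{1}{\Gamma(\alpha)}\int_{-x}^{-a}(t+x)^{\alpha-1} f^{\star}(t)\,dt.
$$
Because $a\le x\le b$ forces $-b\le -x\le -a$, the limits $-x\le -a$ are correctly ordered for the right operator.

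Next I would substitute $t=-\tau$, so that $dt=-d\tau$, the lower limit $t=-x$ becomes $\tau=x$ and the upper limit $t=-a$ becomes $\tau=a$, while the kernel becomes $(t+x)^{\alpha-1}=(x-\tau)^{\alpha-1}$. Invoking Definition~\ref{def:dualF}, one has $f^{\star}(t)=f^{\star}(-\tau)=f(\tau)$. The sign coming from $dt=-d\tau$ combines with the reversal of the orientation of the integration interval, and the expression collapses to
$$
\frac{1}{\Gamma(\alpha)}\int_{a}^{x}(x-\tau)^{\alpha-1} f(\tau)\,d\tau
={_aI_x^\alpha}f(x),
$$
which is exactly the left fractional integral of $f$, completing the proof.

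The computation is elementary, so there is no serious obstacle; the only point requiring care is the bookkeeping. One must track simultaneously the sign introduced by $dt=-d\tau$ and the swap of the integration limits induced by the reflection, and confirm that the two effects cancel so as to restore the correct orientation $\int_a^x$ rather than $\int_x^a$. I would also note that the argument is purely formal and relies solely on the integral representation, so the identity holds for every positive real $\alpha$ and for any $f$ for which the integrals converge, with no need to distinguish the integer and noninteger cases.
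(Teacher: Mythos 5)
Your proposal is correct and is essentially the paper's own argument run in the opposite direction: the paper starts from ${_aI_x^\alpha}f(x)$ and substitutes $s=-t$ to arrive at the right integral of $f^{\star}$, while you start from ${_{-x}I_{-a}^\alpha}f^{\star}(-x)$ and substitute $t=-\tau$ to recover the left integral of $f$. The change of variables, the kernel bookkeeping, and the cancellation of the sign with the limit swap are identical in substance.
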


\begin{proof}
By definition, the left Riemann--Liouville
fractional integral of order $\alpha$ is
$$
{_aI_x^\alpha}f(x)=\frac{1}{\Gamma(\alpha)}
\int_a^x (x-t)^{\alpha-1}f(t)dt.
$$
By the simple change of coordinates $s=-t$,
\begin{equation*}
\begin{split}
{_aI_x^\alpha}f(x)
&= -\frac{1}{\Gamma(\alpha)}\int_{-a}^{-x} (x+s)^{\alpha-1}f(-s)ds\\
&=\frac{1}{\Gamma(\alpha)}\int_{-x}^{-a} (s-(-x))^{\alpha-1}f^{\star}(s)ds\\
&={_{-x}I_{-a}^\alpha}f^{\star}(-x).
\end{split}
\end{equation*}
This concludes the proof.
\end{proof}

A consequence of Theorem~\ref{fractional_duality}
is the following duality for the Caputo
and Riemann--Liouville fractional derivatives.

\begin{theorem}[Duality of the left and right Caputo fractional derivatives]
\label{thm:C:FD}
Let $f:[a,b]\rightarrow\mathbb{R}$ be a function
and $\alpha$ a positive real number. Then,
$$
{{_a^CD}_x^\alpha}f(x)={{^C_{-x}D}_{-a}^\alpha}f^{\star}(-x).
$$
\end{theorem}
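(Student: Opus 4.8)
The plan is to reduce the statement to the already-proved duality for fractional integrals (Theorem~\ref{fractional_duality}) by exploiting the factorization of the Caputo derivative as a Riemann--Liouville integral of an ordinary derivative. Writing $n$ for the integer with $n-1<\alpha\le n$, I would start from the definitional identity
$$
{_a^CD_x^\alpha}f(x)={_aI_x^{n-\alpha}}f^{(n)}(x),
$$
so that the left Caputo derivative is nothing but the left Riemann--Liouville integral of order $n-\alpha$ applied to the ordinary derivative $f^{(n)}$. For noninteger $\alpha$ one has $n-\alpha\in(0,1)$, hence $n-\alpha>0$ and Theorem~\ref{fractional_duality} is applicable; the integer case $\alpha=n$ is immediate from Remark~\ref{rem:2.1}, where the operators collapse to $\pm f^{(n)}$.

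First I would apply Theorem~\ref{fractional_duality} to the function $g:=f^{(n)}$ with the positive order $n-\alpha$ in place of $\alpha$. This converts the left integral into a right integral evaluated at $-x$ over the reflected interval, giving
$$
{_aI_x^{n-\alpha}}f^{(n)}(x)={_{-x}I_{-a}^{n-\alpha}}\bigl(f^{(n)}\bigr)^{\star}(-x),
$$
where the integrand is now the dual of $f^{(n)}$ rather than the dual of $f$ itself.

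The key step, and the one requiring care, is to relate the dual of the $n$-th derivative, $\bigl(f^{(n)}\bigr)^{\star}$, to the $n$-th derivative of the dual, $\bigl(f^{\star}\bigr)^{(n)}$. Differentiating $f^{\star}(x)=f(-x)$ (Definition~\ref{def:dualF}) $n$ times by the chain rule, each differentiation produces a factor $-1$, so that $\bigl(f^{\star}\bigr)^{(n)}(x)=(-1)^n f^{(n)}(-x)=(-1)^n\bigl(f^{(n)}\bigr)^{\star}(x)$. Hence $\bigl(f^{(n)}\bigr)^{\star}=(-1)^n\bigl(f^{\star}\bigr)^{(n)}$, and substituting this into the previous display yields
$$
{_a^CD_x^\alpha}f(x)=(-1)^n\,{_{-x}I_{-a}^{n-\alpha}}\bigl(f^{\star}\bigr)^{(n)}(-x).
$$

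Finally I would recognize the right-hand side as exactly the definition of the right Caputo derivative. Comparing with ${_x^CD_b^\alpha}h(x)=(-1)^n{_xI_b^{n-\alpha}}h^{(n)}(x)$ and making the identifications $x\mapsto -x$, $b\mapsto -a$, $h\mapsto f^{\star}$, the expression above is precisely ${_{-x}^CD_{-a}^\alpha}f^{\star}(-x)$, which closes the argument. The only delicate point is the $(-1)^n$ bookkeeping, and the reassuring feature is that the sign generated by the chain rule in the third step is exactly the $(-1)^n$ built into the definition of the right Caputo derivative, so the two match rather than accumulate and the resulting identity is clean.
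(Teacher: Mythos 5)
Your proof is correct and follows essentially the same route as the paper: factor the left Caputo derivative as ${_aI_x^{n-\alpha}}$ applied to $f^{(n)}$, invoke Theorem~\ref{fractional_duality} for that integral, and use the chain-rule identity $\bigl(f^{(n)}\bigr)^{\star}=(-1)^n\bigl(f^{\star}\bigr)^{(n)}$ to recognize the right Caputo derivative of $f^{\star}$. Your explicit treatment of the integer case $\alpha=n$ (where $n-\alpha=0$ and the integral-duality theorem, stated for positive order, does not directly apply) is a small point of extra care that the paper's proof passes over silently.
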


\begin{proof}
Set $g(x)=\frac{d^n}{dx^n}f(x)$.
We observe that $g^{\star}(-x)=(-1)^n \frac{d^n}{dx^n}f^{\star}(-x)$.
By definition of the left Caputo derivative,
$$
{{_a^CD}_x^\alpha}f(x)= {_aI_x^{n-\alpha}}g(x).
$$
Hence, by Theorem~\ref{fractional_duality}, we have
\begin{equation*}
\begin{split}
{_aI_x^{n-\alpha}}g(x)&={_{-x}I_{-a}^{n-\alpha}}g^{\star}(-x)\\
&=(-1)^n {_{-x}I_{-a}^{n-\alpha}} \frac{d^n}{dx^n}f^{\star}(-x)\\
&={{^{C}_{-x}D}_{-a}^\alpha}f^{\star}(-x).
\end{split}
\end{equation*}
In the last equality we have used the definition
of right Caputo derivative for the dual function
$f^{\star}$ at the point $-x$.
\end{proof}

\begin{theorem}[Duality of the left and right Riemann--Liouville derivatives]
\label{thm:RL:FD}
Let $f:[a,b]\rightarrow\mathbb{R}$ be a function
and $\alpha$ a positive real number. Then,
$$
{_aD_x^\alpha}f(x)={_{-x}D_{-a}^\alpha}f^{\star}(-x).
$$
\end{theorem}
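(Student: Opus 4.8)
The plan is to reduce everything to the integral duality already established in Theorem~\ref{fractional_duality}, exactly as was done for the Caputo case in Theorem~\ref{thm:C:FD}, the only new ingredient being that here the $n$-fold differentiation sits \emph{outside} the fractional integral rather than inside it. Recall that, with $n-1<\alpha\le n$, the left Riemann--Liouville derivative is ${_aD_x^\alpha}f(x)=\frac{d^n}{dx^n}\,{_aI_x^{n-\alpha}}f(x)$, while the right one carries an extra factor $(-1)^n$ in front of its $n$-th derivative. The first step is therefore to apply Theorem~\ref{fractional_duality}, but with the order $n-\alpha$ in place of $\alpha$, to the inner integral, obtaining ${_aI_x^{n-\alpha}}f(x)={_{-x}I_{-a}^{n-\alpha}}f^{\star}(-x)$.

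The second step is to set up the composition cleanly so that the chain rule is transparent. I would define the auxiliary function $G(y)={_yI_{-a}^{n-\alpha}}f^{\star}(y)$, which collects \emph{all} the $y$-dependence of the right fractional integral of $f^{\star}$ (both through the lower limit of integration and through the integrand). The identity from the first step then reads ${_aI_x^{n-\alpha}}f(x)=G(-x)$, so that
$$
{_aD_x^\alpha}f(x)=\frac{d^n}{dx^n}\,{_aI_x^{n-\alpha}}f(x)=\frac{d^n}{dx^n}\,G(-x).
$$

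Differentiating the composite $x\mapsto G(-x)$ is the heart of the argument: each application of $d/dx$ produces one factor of $-1$ by the chain rule, so after $n$ derivatives one gets $\frac{d^n}{dx^n}G(-x)=(-1)^n G^{(n)}(-x)$. Finally I would recognize the right-hand side as the right Riemann--Liouville derivative: by its very definition, ${_{-x}D_{-a}^\alpha}f^{\star}(-x)=(-1)^n\big[\frac{d^n}{dy^n}\,{_yI_{-a}^{n-\alpha}}f^{\star}(y)\big]_{y=-x}=(-1)^n G^{(n)}(-x)$, which matches. Chaining these equalities yields the claim.

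The step I expect to require the most care is the bookkeeping of the two independent factors of $(-1)^n$: one coming from the chain rule when differentiating $G(-x)$, and one built into the definition of the right Riemann--Liouville derivative. They appear on opposite sides of the equation and must be kept separate — it is precisely their presence that makes the stray signs cancel and the duality come out sign-free. Writing $G$ as a single function of its argument, rather than manipulating the integral and its $x$-dependent limit directly, is the device that keeps this chain-rule computation honest and avoids double-counting the sign.
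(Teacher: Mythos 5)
Your proposal is correct and follows essentially the same route as the paper: apply the integral duality of Theorem~\ref{fractional_duality} at order $n-\alpha$, then track the factor $(-1)^n$ produced by the chain rule under $s=-x$ and match it against the $(-1)^n$ in the definition of the right Riemann--Liouville derivative. Your device of naming the composite $G(y)={_yI_{-a}^{n-\alpha}}f^{\star}(y)$ makes the sign bookkeeping more transparent than the paper's own write-up, but the argument is the same.
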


\begin{proof}
The left  Riemann--Liouville fractional derivative of order
$\alpha$ for the function $f$ is, by definition,
$$
{_aD_x^\alpha}f(x)=\frac{d^n}{dx^n}  {_aI_x^{n-\alpha}}f(x).
$$
By definition, the right Riemann--Liouville fractional derivative
of order $\alpha$ for the function $f^{\star}(s)$ at $-x$ is
$$
{_{-x}D_{-a}^\alpha}f^{\star}(-x)
=(-1)^n\frac{d^n}{ds^n}{_{-x}I_{-a}^\alpha}f^{\star}(-x).
$$
By Theorem~\ref{fractional_duality},
$$
{_aI_x^\alpha}f(x)={_{-x}I_{-a}^\alpha}f^{\star}(-x).
$$
Therefore,
$$
{_aD_x^\alpha}f(x)=\frac{d^n}{dx^n}{_{-x}I_{-a}^\alpha}f^{\star}(-x).
$$
Using the change of coordinates $s=-x$,
$$
{_aD_x^\alpha}f(x)=(-1)^n\frac{d^n}{ds^n}{_{-x}I_{-a}^\alpha}f^{\star}(-x).
$$
It follows, by definition, that
$$
{_aD_x^\alpha}f(x)={_{-x}D_{-a}^\alpha}f^{\star}(-x).
$$
This concludes the proof.
\end{proof}

A consequence of Theorems~\ref{thm:C:FD} and \ref{thm:RL:FD} is the following corollary,
which is valid for any of the fractional derivatives considered in this article.

\begin{corollary}
\label{cor:mr}
The right derivative of the dual function of a given function
$f$ is the dual of the left derivative of $f$.
\end{corollary}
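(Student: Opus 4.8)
The plan is to read Corollary~\ref{cor:mr} as a verbal reformulation of the two preceding duality theorems, so that no new analytic work is needed: Theorems~\ref{thm:C:FD} and \ref{thm:RL:FD} already carry the full content, and all that remains is to recast their \emph{pointwise} identities as an equality \emph{between functions}. Accordingly, I would not attempt any fresh integral manipulation, but instead invoke those theorems together with the involution property of the dual operation recorded in Remark~\ref{rem:dual:dual}.

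First I would fix one of the fractional derivatives, say the Riemann--Liouville one, and abbreviate the left derivative of $f$ as a single function $g(x) := {_aD_x^\alpha}f(x)$, defined on $[a,b]$, and the right derivative of the dual function $f^{\star}$ as $R(y) := {_yD_{-a}^\alpha}f^{\star}(y)$, defined on $[-b,-a]$. With this notation, Theorem~\ref{thm:RL:FD} reads exactly $g(x) = R(-x)$ for every $x \in [a,b]$. The crucial observation is that the map $x \mapsto R(-x)$ is, by Definition~\ref{def:dualF}, nothing but the dual $R^{\star}$ of $R$; hence the theorem says $g = R^{\star}$ as functions on $[a,b]$. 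Applying the star operation to both sides and using $R^{\star\star} = R$ from Remark~\ref{rem:dual:dual}, I obtain $R = g^{\star}$, which is precisely the assertion that the right derivative of the dual function $f^{\star}$ equals the dual of the left derivative of $f$.

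I would then remark that the identical argument applies verbatim with the Caputo operators, replacing Theorem~\ref{thm:RL:FD} by Theorem~\ref{thm:C:FD}, and that the integer-order case is subsumed by the reductions in Remark~\ref{rem:2.1}; this justifies the phrase \emph{any of the fractional derivatives considered in this article}. The only point demanding care, and the closest thing to an obstacle, is purely notational: one must track whether the star acts on the variable before or after differentiation and confirm that the domains match, since $g^{\star}$ and $R$ both live on $[-b,-a]$ while $g$ and $R^{\star}$ live on $[a,b]$. Once this bookkeeping is settled the proof is immediate, amounting to combining the two duality theorems with the elementary involution of the dual.
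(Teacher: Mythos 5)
Your proposal is correct and matches the paper's approach: the paper states Corollary~\ref{cor:mr} as an immediate consequence of Theorems~\ref{thm:C:FD} and \ref{thm:RL:FD} without further argument, and your write-up simply makes explicit the bookkeeping (setting $g(x)={_aD_x^\alpha}f(x)$, $R(y)={_yD_{-a}^\alpha}f^{\star}(y)$, reading the theorem as $g=R^{\star}$, and applying the involution of Remark~\ref{rem:dual:dual} to get $R=g^{\star}$). No gaps; the same reasoning applies to the Caputo case via Theorem~\ref{thm:C:FD}, exactly as you note.
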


\begin{remark}
Having in mind Remark~\ref{rem:dual:dual}, one can write,
in an equivalent way, Corollary~\ref{cor:mr} as:
\begin{quotation}
The right derivative of a given function $f$ is the dual
of the left derivative of the dual of $f$.
\end{quotation}
\end{remark}


\section{Application of our main result}
\label{sec4}

The duality introduced in Section~\ref{sec3},
between left and right fractional operators,
can be used to obtain results for the left operators,
from analogous results on the right operators, and vice versa.
We illustrate the usefulness of our duality argument
with a fractional integration by parts formula
(Section~\ref{sub:sec:IBP}), which is the fundamental tool
to establish necessary optimality conditions
in the fractional calculus of variations \cite{book:FCV},
and by proving existence of minimizers
for fractional variational problems with right fractional operators
from the recent existence results obtained in \cite{BourdinTorres}
for left fractional variational problems (Section~\ref{sub:sec:VP}).


\subsection{An integration by parts formula}
\label{sub:sec:IBP}

Let $\alpha>0$ be a real number between the integers $n-1$ and $n$.
The following integration by parts formula
for the left Caputo derivative is well known
(see, \textrm{e.g.}, \cite{AGRA3}):
\begin{multline}
\label{eq:t}
\int_{a}^{b}g(x)\cdot {_a^C D_x^\alpha}f(x)dx
= \int_a^b f(x)\cdot {_x D_b^\alpha} g(x)dx\\
+ \sum_{j=0}^{n-1}\left[{_xD_b^{\alpha+j-n}}g(x)
\cdot {_xD_b^{n-1-j}} f(x)\right]_a^b,
\end{multline}
where ${_aD_x^{k}}g(x)={_aI_x^{-k}}g(x)$
and  ${_xD_b^{k}}g(x)={_xI_b^{-k}}g(x)$ if $k<0$.
Using the integration by parts formula \eqref{eq:t}
for the left Caputo derivative, we deduce, by duality,
the integration by parts formula
for the right Caputo derivative.

\begin{corollary}[Integration by parts formula]
\label{cor4.1}
Let $\alpha$ be a positive real number and
$n$ the integer satisfying $n-1 < \alpha \le n$.
The following relation holds:
\begin{multline}
\label{eq:IBP:cor}
\int_{a}^{b}g(x)\cdot {_x^C D_b^\alpha}f(x)dx
= \int_a^b f(x)\cdot {_a D_x^\alpha} g(x)dx\\
- \sum_{j=0}^{n-1}\left[{_aD_x^{\alpha+j-n}}g(x)
\cdot {_aD_x^{n-1-j}} f(x)\right]_a^b.
\end{multline}
\end{corollary}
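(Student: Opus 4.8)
The plan is to derive \eqref{eq:IBP:cor} from the known left-Caputo formula \eqref{eq:t} purely by duality, without performing any new integration by parts. First I would apply \eqref{eq:t} not to $f$ and $g$ on $[a,b]$, but to the dual functions $f^\star$ and $g^\star$ on the reflected interval $[-b,-a]$, that is, with the substitutions $a\mapsto -b$, $b\mapsto -a$, $f\mapsto f^\star$, $g\mapsto g^\star$. This yields an identity in which every operator is a \emph{left} Caputo derivative or a \emph{right} Riemann--Liouville operator attached to the reflected interval and evaluated on the dual functions; structurally it is the same formula, only ``reflected.''

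The second step is to translate each operator back to the original data. Reading Theorem~\ref{thm:C:FD} for $f^\star$ on $[-b,-a]$ and using $f^{\star\star}=f$ gives ${_{-b}^C D_x^\alpha} f^\star(x)={_{-x}^C D_b^\alpha} f(-x)$, so that the left-hand integrand of the reflected formula will become $g(y)\cdot {_y^C D_b^\alpha} f(y)$ after substitution. In the same way Theorem~\ref{thm:RL:FD}, read in the direction ``right derivative $=$ dual of the left derivative of the dual'' (the equivalent form of Corollary~\ref{cor:mr}), converts ${_x D_{-a}^\alpha} g^\star(x)$ into ${_a D_{-x}^\alpha} g(-x)$. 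Since the boundary entries of \eqref{eq:t} with negative upper index are by convention fractional integrals, the orders $\alpha+j-n<0$ are covered on the same footing by Theorem~\ref{fractional_duality}, so the duality applies uniformly to every operator in the sum.

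The third step is the change of variables $x=-y$, which sends $dx\mapsto -dy$ and interchanges the endpoints $-b,-a$ with $b,a$. In the two integrals the sign from $-dy$ is cancelled by the reversal of the limits, so both integrals reappear over $[a,b]$ with their correct signs, producing the left-hand side of \eqref{eq:IBP:cor} together with its first right-hand term (using $g^\star(-y)=g(y)$ and $f^\star(-y)=f(y)$ throughout). The boundary sum, by contrast, is evaluated as $[\cdot]_{-b}^{-a}$, i.e. value at $-a$ minus value at $-b$; under $x=-y$ this equals value at $y=a$ minus value at $y=b$, which is exactly $-[\cdot]_a^b$. Each term of the sum thereby becomes ${_a D_x^{\alpha+j-n}} g(x)\cdot {_a D_x^{n-1-j}} f(x)$, and this orientation-induced sign flip is precisely the source of the minus sign in front of the sum in \eqref{eq:IBP:cor}.

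The analytic content here is essentially nil, since no fresh integration is carried out; the only delicate point — and where I expect the main obstacle — is the bookkeeping of the reflection. Concretely, I must apply the duality theorems to $f^\star,g^\star$ rather than to $f,g$ in a consistent way (invoking $f^{\star\star}=f$), track the endpoint evaluation through $x=-y$ so that the boundary term emerges as $-[\cdot]_a^b$ and not $+[\cdot]_a^b$, and confirm that the negative-order entries in the sum are legitimately handled by Theorem~\ref{fractional_duality}. Once these reflection details are pinned down, \eqref{eq:IBP:cor} follows immediately from \eqref{eq:t}.
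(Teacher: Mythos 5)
Your proposal is correct and follows essentially the same route as the paper: both proofs apply the known left-Caputo formula \eqref{eq:t} to the dual functions $f^{\star},g^{\star}$ on $[-b,-a]$ and then use Theorems~\ref{fractional_duality}, \ref{thm:C:FD} and \ref{thm:RL:FD} together with the reflection $x=-s$ to return to $[a,b]$, the minus sign in front of the boundary sum arising exactly as you describe from $[\cdot]_{-b}^{-a}$ becoming $-[\cdot]_a^b$. Your explicit remark that the negative-order (integral) entries in the sum are covered by Theorem~\ref{fractional_duality} is a detail the paper leaves implicit, but the argument is the same.
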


\begin{proof}
From Definition~\ref{def:dualF},
Remark~\ref{rem:dual:dual}, and Theorem~\ref{thm:C:FD},
it follows that
\begin{equation*}
\int_{a}^{b} g(x)\cdot {_x^C D_b^\alpha}f(x)dx
= \int_{-b}^{-a}g^{\star}(s)\cdot {_{-b}^C D_s^\alpha}f^{\star}(s)ds.
\end{equation*}
By the integration by parts formula \eqref{eq:t} and Theorem~\ref{thm:RL:FD},
\begin{equation*}
\begin{split}
\int_{-b}^{-a}g^{\star}(s)\cdot {_{-b}^C D_s^\alpha}f^{\star}(s)ds
&= \int_{-b}^{-a} f^{\star}(s) \cdot {_{s}D_{-a}^\alpha}g^{\star}(s)ds\\
& \qquad + \sum_{j=0}^{n-1}\left[{_sD_{-a}^{\alpha+j-n}}g^{\star}(s)
\cdot {_sD_{-a}^{n-1-j}} f^{\star}(s)\right]_{-b}^{-a}\\
&=\int_a^b f(x)\cdot {_a D_x^\alpha} g(x)dx\\
& \qquad - \sum_{j=0}^{n-1}\left[{_aD_x^{\alpha+j-n}}g(x)
\cdot {_aD_x^{n-1-j}} f(x)\right]_a^b.
\end{split}
\end{equation*}
This concludes the proof.
\end{proof}

\begin{remark}
\label{rem4.2}
Let $0 < \alpha \le 1$. In this case $n = 1$
and \eqref{eq:IBP:cor} reduces to
\begin{equation}
\label{eq:right:alpha:0:1}
\int_{a}^{b}g(x) \cdot {_x^C D_b^\alpha}f(x)dx
= \int_a^b f(x)\cdot {_a D_x^\alpha} g(x)dx\\
- \left[{_aD_x^{\alpha-1}}g(x) \cdot f(x)\right]_a^b.
\end{equation}
If $\alpha=1$, then the integration by parts
formula \eqref{eq:right:alpha:0:1}
gives the classical relation
$$
\int_{a}^{b}g(x)\cdot (-f'(x))dx
=\int_a^b f(x)\cdot g'(x)dx-\left[g(x) \cdot f(x)\right]_a^b
$$
(recall Remark~\ref{rem:2.1}).
\end{remark}

Corollary~\ref{cor4.1} is an application of our Theorems~\ref{thm:C:FD} and \ref{thm:RL:FD}.
The integration by parts formula \eqref{eq:IBP:cor} has a crucial role
in the fractional variational calculus \cite{book:FCV}. Our proof shows that such
important result follows by duality from another
well known integration by parts formula \eqref{eq:t}, without the need
to repeat all the arguments again: formulas \eqref{eq:t} and \eqref{eq:IBP:cor}
are dual and one only needs to provide a proof to one of them,
the proof to the other following then by duality, as a corollary.


\subsection{Existence for right fractional variational problems}
\label{sub:sec:VP}

While the study of fractional variational problems was initiated by Riewe
in 1996/97 \cite{Riewe1,Riewe2}, including from the very beginning
problems with both left and right fractional operators, the question of existence
was only address in 2013 and only for left
fractional variational problems \cite{BourdinTorres}.
Here we show, from the duality introduced in Section~\ref{sec3},
how existence of solutions for fractional problems
of the calculus of variations involving right operators
follow from the results of \cite{BourdinTorres}.

Let $a,b$ be two real numbers such that $a<b$, let $d\in\N^*$,
where $\N^*$ denotes the set of positive integers,
and let $\left\|\cdot\right\|$ denote
the standard Euclidean norm of $\R^d$.
For any $1 \leq r \leq \infty$, denote
\begin{itemize}
\item by $\L^r := \L^r (a,b;\R^d)$ the usual space of $r$-Lebesgue
integrable functions endowed with its usual norm $\Vert \cdot \Vert_{\L^r}$;
\item by $\W^{1,r} := \W^{1,r} (a,b;\R^d)$ the usual $r$-Sobolev
space endowed with its usual norm $\Vert \cdot \Vert_{\W^{1,r}}$.
\end{itemize}
Furthermore, let $\CC := \CC ([a,b];\R^d)$ be understood as the standard space
of continuous functions and $\CC^{\infty}_c := \CC^{\infty}_c ([a,b];\R^d)$
as the standard space of infinitely differentiable functions compactly supported in $(a,b)$.
Finally, let us remind that the compact embedding
$W^{1,r}\hookrightarrow \CC$  holds for $1 < r \le +\infty$.

Let $0< \alpha <1$ and $\dot{f}$ denote the usual derivative of $f$.
Then the left and the right Caputo fractional derivatives
of order $\alpha$ are given by
\begin{equation*}
\Dl[f](t):=\IlC [\dot{f}](t)
~~\textnormal{ and }~~\Dr[f](t):=-\IrC [\dot{f}](t)
\end{equation*}
for all $t\in (a,b]$  and $t \in [a,b)$, respectively.
Let $1 < p < \infty$, $p'$ denote the adjoint of $p$,
and consider the (right) fractional variational functional
\begin{equation}
\label{eq:varFunc}
\begin{gathered}
{\LL}: {\E}\rightarrow{\R}\\
{u}\mapsto{\displaystyle
\int_a^b L\left(u(t),\Ir[u](t),\dot{u}(t),\Dr[u](t),t\right) \; dt}.
\end{gathered}
\end{equation}
Our main goal is to prove existence of minimizers for $\LL$.
An an example, consider the classical problem of linear friction:
\begin{equation}
\label{eq:lin:fric}
m\frac{d^2u}{dt^2}+\gamma\frac{du}{dt} -\frac{\partial U}{\partial u}=0, \quad \gamma>0.
\end{equation}
In 1931, Bauer proved that it is impossible to use a
variational principle to derive this linear dissipative
equation of motion with constant coefficients.
Bauer's theorem expresses the well-known belief that there is no direct
method of applying variational principles to nonconservative
systems, which are characterized by friction or other dissipative
processes. The proof of Bauer's theorem relies, however, on the tacit
assumption that all derivatives are of integer order.
With the Lagrangian
$$
L=\frac{1}{2}m \dot{u}^2-U(u)+\frac{1}{2}\gamma\left({_t^CD_b^{\frac{1}{2}}}u\right)^2
$$
one can obtain \eqref{eq:lin:fric} from the fractional variational principle.
For details see \cite{Riewe2}.

We assume that $\E$ is a weakly closed subset of
$\W^{1,p}$, $\dot{u}$ is the derivative of $u$,
and $L$ is a Lagrangian of class $\CC^1$:
\begin{equation}
\label{eq:Lagrangian}
\begin{gathered}
{L}:{(\R^d)^4 \times [a,b]}\rightarrow{\R}\\
{(x_1,x_2,x_3,x_4,t)}\mapsto {L(x_1,x_2,x_3,x_4,t).}
\end{gathered}
\end{equation}
By $\partial_i L$ we denote the partial derivatives
of $L$ with respect to its $i$th argument, $i = 1,\ldots, 5$.
We introduce the following notions of regularity and coercivity.

\begin{definition}[Regular Lagrangian]
\label{def:regular}
We say that a Lagrangian $L$ given by \eqref{eq:Lagrangian} is \emph{regular} if
\begin{itemize}
\item $L\left(u,\Ir[u],\dot{u},\Dr[u],t\right) \in \L^1$;
\item $\partial_1 L\left(u,\Ir[u],\dot{u},\Dr[u],t\right) \in \L^1$;
\item $\partial_2 L\left(u,\Ir[u],\dot{u},\Dr[u],t\right) \in \L^{p'}$;
\item $\partial_3 L\left(u,\Ir[u],\dot{u},\Dr[u],t\right) \in \L^{p'}$;
\item $\partial_4 L\left(u,\Ir[u],\dot{u},\Dr[u],t\right) \in \L^{p'}$;
\end{itemize}
for any $u\in\W^{1,p}$.
\end{definition}

\begin{definition}[Coercive functional]
\label{def:coercivity}
We say that a fractional variational functional $\LL$
is \emph{coercive} on $E$ if
\begin{equation*}
\lim\limits_{\substack{\Vert u \Vert_{\W^{1,p}}
\to \infty \\ u \in \E }} \LL(u) = +\infty.
\end{equation*}
\end{definition}

Finally, we introduce the notions of dual Lagrangian
and dual variational functional.

\begin{definition}[Dual Lagrangian]
\label{def:dualLag}
Let $L$ be a Lagrangian \eqref{eq:Lagrangian}.
The dual Lagrangian $L^{\star}$ of $L$ is defined by
\begin{gather*}
{L^{\star}}:{(\R^d)^4 \times [-b,-a]}\rightarrow{\R}\\
{(x_1,x_2,x_3,x_4,s)}\mapsto{L(x_1,x_2,-x_3,x_4,-s).}
\end{gather*}
\end{definition}

\begin{definition}[Dual variational functional]
\label{def:DualVarFunc}
We say that $\LL^{\star}$ is the dual variational functional
of ${\LL}$ given by \eqref{eq:varFunc} if
\begin{gather*}
\LL^{\star} : {\E^{\star}}\rightarrow{\R}\\
{u^{\star}}\rightarrow{\displaystyle
\int_{-b}^{-a} L^{\star}\left(u^{\star},{_aI_s^\alpha}[u^{\star}],
\dot{u^{\star}},{_a^CD_s^\alpha}[u^{\star}],s\right) \; ds},
\end{gather*}
where $E^{\star}$ is the dual space of $E$ (see \cite{Caputo})
and $L^{\star}$ is the dual Lagrangian of $L$.
\end{definition}

Note that in Definition~\ref{def:dualLag} we only put a minus sign
in the entries that correspond to the independent time variable $t$
and the classical derivative. In this way, doing the change of variable $s = -t$ in
the fractional variational functional \eqref{eq:varFunc}, we obtain the dual variational functional
of Definition~\ref{def:DualVarFunc} from the results of Section~\ref{sec3}.

\begin{proposition}
\label{prop:reg}
A Lagrangian $L$ is \emph{regular}, in the (right) sense of Definition~\ref{def:regular},
if and only if $L^{\star}$, the dual Lagrangian of $L$,
is \emph{regular} in the (left) sense of \cite[Definition~3.1]{BourdinTorres}.
\end{proposition}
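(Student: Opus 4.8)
The plan is to fix a competitor $u$ together with its dual $u^{\star}$ and to show that each of the five membership requirements defining right-regularity of $L$ along $u$ (Definition~\ref{def:regular}) is equivalent to the matching requirement defining left-regularity of $L^{\star}$ along $u^{\star}$ in \cite[Definition~3.1]{BourdinTorres}; since $f\mapsto f^{\star}$ is a bijection of the relevant Sobolev spaces, the equivalence for a fixed competitor will then upgrade to the universally quantified biconditional asserted in the proposition.

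The heart of the argument is a single pointwise identity obtained through the change of variable $s=-t$. On the one hand, the chain rule gives $u(t)=u^{\star}(s)$ and $\dot u(t)=-\dot{u^{\star}}(s)$; on the other hand, applying Theorems~\ref{fractional_duality} and~\ref{thm:C:FD} to the dual $u^{\star}$ (recall $u^{\star\star}=u$ from Remark~\ref{rem:dual:dual}) yields ${_tI_b^\alpha}u(t)={_{-b}I_s^\alpha}u^{\star}(s)$ and ${_t^CD_b^\alpha}u(t)={_{-b}^CD_s^\alpha}u^{\star}(s)$, where the left operators attached to $u^{\star}$ carry lower limit $-b$, the left endpoint of the dual interval $[-b,-a]$. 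Substituting these four relations into $L$ and invoking the definition $L^{\star}(x_1,x_2,x_3,x_4,s)=L(x_1,x_2,-x_3,x_4,-s)$ of Definition~\ref{def:dualLag}, so that the built-in minus signs in the third and fifth slots absorb precisely the sign on $\dot u$ and the reflection of the time variable, I obtain
\begin{multline*}
L\bigl(u(t),{_tI_b^\alpha}u(t),\dot u(t),{_t^CD_b^\alpha}u(t),t\bigr)\\
=L^{\star}\bigl(u^{\star}(s),{_{-b}I_s^\alpha}u^{\star}(s),\dot{u^{\star}}(s),{_{-b}^CD_s^\alpha}u^{\star}(s),s\bigr),\quad s=-t.
\end{multline*}
Thus the integrand of $\LL$ at $t$ coincides with the integrand of $\LL^{\star}$ at $s=-t$.

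Next I would transfer this to the $\L^r$ conditions. Since $s=-t$ maps $(a,b)$ onto $(-b,-a)$ with unit Jacobian, a function on $(a,b)$ lies in $\L^r(a,b)$ if and only if its reflection lies in $\L^r(-b,-a)$, for every $r$; applied to the common integrand this settles the first requirement. For the partial derivatives I would differentiate $L^{\star}$ through Definition~\ref{def:dualLag}: the chain rule gives $\partial_1L^{\star}$, $\partial_2L^{\star}$, $\partial_4L^{\star}$ equal to $\partial_1L$, $\partial_2L$, $\partial_4L$ evaluated at the reflected arguments, and $\partial_3L^{\star}=-\partial_3L$ likewise. Hence, along $u^{\star}$, each composite $\partial_iL^{\star}$ is, up to a global sign that is irrelevant for membership in an $\L^r$ space, the reflection of $\partial_iL$ along $u$. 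The exponents then line up term by term ($\L^1$ for $L$ and $\partial_1L$, $\L^{p'}$ for $\partial_2L,\partial_3L,\partial_4L$), so the four remaining conditions transfer verbatim; the fifth partial derivative never enters either regularity notion and can be ignored.

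Finally I would close the equivalence at the level of the quantifiers: by Remark~\ref{rem:dual:dual}, reflection is an involutive, norm-preserving bijection of $\W^{1,p}(a,b;\R^d)$ onto $\W^{1,p}(-b,-a;\R^d)$, so ``for every $u$'' in Definition~\ref{def:regular} and ``for every competitor'' in \cite[Definition~3.1]{BourdinTorres} range over the same functions under duality, which promotes the fixed-competitor equivalences to the proposition. The only genuine hazard here is bookkeeping rather than analysis: one must keep the endpoints and signs straight through the reflection --- in particular confirming that the left operators attached to $u^{\star}$ take lower limit $-b$ and that the minus sign hard-wired into $L^{\star}$ is exactly the one produced by $\dot u(t)=-\dot{u^{\star}}(s)$ --- after which no analytic difficulty remains.
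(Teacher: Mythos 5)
Your proof is correct and follows the same route the paper intends: the paper's own proof is a one-line assertion that the result is ``a direct consequence'' of the definitions and the duality theorems of Section~\ref{sec3}, and your write-up simply supplies the omitted details (the pointwise identity between the integrands via $s=-t$, the sign bookkeeping for $\partial_3L^{\star}$, and the reflection invariance of the $\L^r$ and $\W^{1,p}$ memberships), all of which check out. Your use of $-b$ as the lower limit of the left operators on the dual interval is the correct reading (the paper's Definition~\ref{def:DualVarFunc} writes $a$ there, which appears to be a typo).
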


\begin{proof}
The result is a direct consequence of the definition of regular Lagrangian
introduced by \cite[Definition~3.1]{BourdinTorres} for left fractional
operators and the duality results of Section~\ref{sec3}.
\end{proof}

\begin{proposition}
\label{prop:coerciv}
A fractional variational functional $\LL$ given by \eqref{eq:varFunc}
is \emph{coercive} on $E$ if and only if the corresponding
dual variational functional $\LL^{\star}$ is \emph{coercive} on $E^{\star}$.
\end{proposition}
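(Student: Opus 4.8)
The plan is to establish the equivalence by relating the norm of a function $u \in \W^{1,p}(a,b;\R^d)$ to the norm of its dual $u^{\star} \in \W^{1,p}(-b,-a;\R^d)$, and then to transport the coercivity limit through the change of variable $s=-t$ that identifies $\LL(u)$ with $\LL^{\star}(u^{\star})$. The key observation, already implicit in the construction of the dual variational functional in Definition~\ref{def:DualVarFunc} and the remark preceding Proposition~\ref{prop:reg}, is that the substitution $s=-t$ together with the duality results of Section~\ref{sec3} gives the pointwise-integral identity $\LL(u)=\LL^{\star}(u^{\star})$ for every admissible $u$. So the entire content of the proposition reduces to showing that the two coercivity conditions are triggered by the same sequences of functions, which amounts to the norm-preservation statement below.

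First I would verify that the map $u \mapsto u^{\star}$ is a bijection between $\E$ and $\E^{\star}$ that preserves the $\W^{1,p}$-norm. Concretely, for $u:[a,b]\to\R^d$ with dual $u^{\star}(s)=u(-s)$ on $[-b,-a]$, the change of variable $s=-t$ in the defining integrals shows $\Vert u^{\star} \Vert_{\L^p} = \Vert u \Vert_{\L^p}$, and since $\dot{u^{\star}}(s)=-\dot{u}(-s)$ the same substitution gives $\Vert \dot{u^{\star}} \Vert_{\L^p} = \Vert \dot{u} \Vert_{\L^p}$; hence $\Vert u^{\star} \Vert_{\W^{1,p}} = \Vert u \Vert_{\W^{1,p}}$. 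The sign on the derivative does not affect the norm because it is raised to the $p$th power under an absolute value. In particular $\Vert u \Vert_{\W^{1,p}} \to \infty$ if and only if $\Vert u^{\star} \Vert_{\W^{1,p}} \to \infty$.

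Next I would record the identity $\LL(u)=\LL^{\star}(u^{\star})$ itself. Performing $s=-t$ in \eqref{eq:varFunc} and invoking Theorem~\ref{fractional_duality} (to rewrite ${_tI_b^\alpha}[u](t)$ as ${_{-t}I_{-a}^\alpha}[u^{\star}](-t)$ evaluated at $s=-t$, i.e. ${_aI_s^\alpha}[u^{\star}]$ after relabeling) and Theorem~\ref{thm:C:FD} (to rewrite $\Dr[u](t)$ as the left Caputo derivative of $u^{\star}$), together with $\dot{u}(t)=-\dot{u^{\star}}(-t)$, matches the four arguments of $L$ against the four arguments of $L^{\star}$ exactly as prescribed in Definition~\ref{def:dualLag}; the sign on the third (classical-derivative) slot is absorbed by the minus built into $L^{\star}$, and the time argument flips from $t$ to $-s$. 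This yields $\int_a^b L(\cdots)\,dt = \int_{-b}^{-a} L^{\star}(\cdots)\,ds$, that is, $\LL(u)=\LL^{\star}(u^{\star})$.

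Finally I would combine the two facts. Since $u\mapsto u^{\star}$ is a norm-preserving bijection $\E\to\E^{\star}$ and $\LL(u)=\LL^{\star}(u^{\star})$, the limit condition $\lim_{\Vert u\Vert_{\W^{1,p}}\to\infty,\,u\in\E}\LL(u)=+\infty$ of Definition~\ref{def:coercivity} holds if and only if the corresponding limit for $\LL^{\star}$ over $\E^{\star}$ holds, which is the claim. The main obstacle, such as it is, is purely bookkeeping: one must confirm that the four substituted arguments produced by the change of variable line up precisely with the four arguments of the dual Lagrangian in Definition~\ref{def:dualLag}, so that the fractional-integral and Caputo-derivative terms transform correctly under Theorems~\ref{fractional_duality} and~\ref{thm:C:FD} and the lone minus sign lands on the classical derivative as intended. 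Once that alignment is checked, there is no analytic difficulty, and the equivalence of the coercivity conditions is immediate.
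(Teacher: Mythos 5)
Your proposal is correct and follows essentially the same route as the paper, whose entire proof of this proposition reads ``Direct consequence of definitions.'' The two facts you make explicit --- that $u\mapsto u^{\star}$ is a $\W^{1,p}$-norm-preserving bijection of $\E$ onto $\E^{\star}$, and that the change of variable $s=-t$ together with Theorems~\ref{fractional_duality} and~\ref{thm:C:FD} gives $\LL(u)=\LL^{\star}(u^{\star})$ --- are precisely the bookkeeping the paper's one-line proof leaves implicit, so your write-up is a faithful (and more complete) expansion of the intended argument.
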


\begin{proof}
Direct consequence of definitions.
\end{proof}

The next result asserts the boundedness of the right Riemann--Liouville
fractional integrals in the space $\L^r$.

\begin{proposition}
\label{prop:bounded}
The right Riemann--Liouville fractional integral
$\Ir$ with $\alpha >0$ is a linear and bounded operator in $\L^r$:
\begin{equation*}
\left\|\Ir[f]\right\|_{\L^r}
\leq \frac{(b-a)^{\alpha}}{\Gamma(1+\alpha)}\left\|f\right\|_{\L^r}
\end{equation*}
for all $f\in\L^r$, $1 \leq r \leq +\infty$.
\end{proposition}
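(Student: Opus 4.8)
The plan is to separate the two assertions. Linearity of $\Ir$ is immediate from the linearity of the Lebesgue integral, so the whole content is the norm bound, and for that the natural route --- in the spirit of the present paper --- is to transport the analogous bound for the \emph{left} Riemann--Liouville integral across the duality of Section~\ref{sec3}.

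First I would rewrite $\Ir[f]$ as a left integral of the dual function. Applying Theorem~\ref{fractional_duality} to $f^{\star}$ on the reflected interval $[-b,-a]$ and using $f^{\star\star}=f$ (Remark~\ref{rem:dual:dual}), one obtains
\begin{equation*}
\Ir[f](x) = {_{-b}I_{-x}^\alpha}f^{\star}(-x),
\end{equation*}
that is, $\Ir[f]$ is the left Riemann--Liouville integral of $f^{\star}$ over $[-b,-a]$, read off at the reflected point $s=-x$. Two elementary facts then close the argument. The reflection $x\mapsto -x$ preserves Lebesgue measure, so $f\mapsto f^{\star}$ is an isometry of $\L^r$; concretely, $\|\Ir[f]\|_{\L^r(a,b)}=\|{_{-b}I_s^\alpha}f^{\star}\|_{\L^r(-b,-a)}$ and $\|f^{\star}\|_{\L^r(-b,-a)}=\|f\|_{\L^r(a,b)}$, both by the change of variable $s=-x$. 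Moreover the reflected interval has the same length, $(-a)-(-b)=b-a$. Hence, invoking the classical $\L^r$-boundedness of the left Riemann--Liouville integral on a finite interval \cite{Kilbas}, namely $\|{_{-b}I_s^\alpha}g\|_{\L^r(-b,-a)}\le\frac{(b-a)^\alpha}{\Gamma(1+\alpha)}\|g\|_{\L^r(-b,-a)}$ with $g=f^{\star}$, and chaining the displayed equalities, I arrive at exactly $\|\Ir[f]\|_{\L^r}\le\frac{(b-a)^\alpha}{\Gamma(1+\alpha)}\|f\|_{\L^r}$.

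For a self-contained variant (which also supplies the left-hand bound cited above) I would estimate directly. Writing $\Ir[f](x)=\int_a^b k(x,t)f(t)\,dt$ with the nonnegative kernel $k(x,t)=\frac{(t-x)^{\alpha-1}}{\Gamma(\alpha)}$ for $x\le t$ and $k(x,t)=0$ otherwise, the two ``kernel masses'' are
\begin{equation*}
\int_a^b k(x,t)\,dt=\frac{(b-x)^\alpha}{\Gamma(1+\alpha)}
\qquad\text{and}\qquad
\int_a^b k(x,t)\,dx=\frac{(t-a)^\alpha}{\Gamma(1+\alpha)},
\end{equation*}
each bounded by $M:=\frac{(b-a)^\alpha}{\Gamma(1+\alpha)}$. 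The Schur test (equivalently, Minkowski's integral inequality) then yields $\|\Ir[f]\|_{\L^r}\le M\|f\|_{\L^r}$ simultaneously for all $1\le r\le\infty$: the bound on the first mass handles $r=\infty$ by pulling out the supremum, the bound on the second handles $r=1$ after a Tonelli interchange of the order of integration, and the two together cover $1<r<\infty$ through the interpolated constant $M^{1/r'}M^{1/r}=M$.

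Neither route presents a genuine obstacle; the estimate is essentially a boundedness-of-convolution computation. The only point deserving care is the intermediate range $1<r<\infty$ in the direct approach, where a naive H\"older estimate gives a worse constant and even imposes an integrability restriction on $\alpha$, so that one must instead use the Schur/Minkowski estimate to recover the first-power constant $M$. In the duality approach this subtlety is simply inherited from \cite{Kilbas}, which is precisely the economy that the duality theory of Section~\ref{sec3} is meant to provide --- the only remaining thing to check is the isometry of $f\mapsto f^{\star}$ on $\L^r$, and that is immediate from the reflection-invariance of Lebesgue measure.
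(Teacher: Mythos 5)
Your first route is exactly the paper's proof: the paper disposes of this proposition in a single line, invoking duality together with the known boundedness of the \emph{left} Riemann--Liouville integral (citing Proposition~2.1 of \cite{BourdinTorres}), and your version simply makes explicit the details the paper leaves implicit, namely the identity $\Ir[f](x)={_{-b}I_{-x}^{\alpha}}f^{\star}(-x)$ obtained from Theorem~\ref{fractional_duality} and the fact that reflection is an isometry of $\L^r$ preserving the interval length. Your additional self-contained Schur-test argument is also correct (and supplies the left-hand bound the paper only cites), but it goes beyond what the paper does.
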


\begin{proof}
The result follows by duality from the boundedness of the left
Riemann--Liouville fractional integral (see, \textrm{e.g.},
\cite[Proposition 2.1]{BourdinTorres}).
\end{proof}

Under regularity, coercivity and convexity
we can prove, from the duality of Section~\ref{sec3}
and the Tonelli theorem in \cite{BourdinTorres}
for fractional variational problems involving left operators,
a Tonelli-type theorem that ensures the existence
of a minimizer for a right fractional variational functional
$\LL$ given by \eqref{eq:varFunc}.

\begin{theorem}[Tonelli's existence theorem
for right fractional variational problems \eqref{eq:varFunc}]
\label{thmtonelli}
If
\begin{itemize}
\item $L$ is regular (Definition~\ref{def:regular});
\item $\LL$ is coercive on $E$ (Definition~\ref{def:coercivity});
\item $L(\cdot,t)$ is convex on $(\R^d)^4$ for any $t \in [a,b]$;
\end{itemize}
then there exists a minimizer for the right fractional variational problem $\LL$.
\end{theorem}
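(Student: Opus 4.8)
The plan is to reduce the \emph{right} fractional problem to its dual \emph{left} fractional problem, invoke the Tonelli existence theorem of \cite{BourdinTorres} there, and then pull the minimizer back by duality. The cornerstone is the identity $\LL(u)=\LL^{\star}(u^{\star})$ for every $u\in\E$. To obtain it I would perform the change of variable $s=-t$ in \eqref{eq:varFunc}, so that the domain $[a,b]$ becomes $[-b,-a]$, and then rewrite each slot of the integrand through the duality of Section~\ref{sec3}: $u(-s)=u^{\star}(s)$ by Definition~\ref{def:dualF}; $\Ir[u](-s)$ equals the left Riemann--Liouville integral of $u^{\star}$ at $s$ by (the dual form of) Theorem~\ref{fractional_duality}; $\Dr[u](-s)$ equals the left Caputo derivative of $u^{\star}$ at $s$ by Theorem~\ref{thm:C:FD}; and, by the chain rule, $\dot u(-s)=-\dot{u^{\star}}(s)$. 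These are precisely the operators appearing in Definition~\ref{def:DualVarFunc}, and the single sign change in the classical-derivative slot and in the time slot is exactly what is encoded in the dual Lagrangian $L^{\star}$ of Definition~\ref{def:dualLag} (where $-x_3=-\dot{u^{\star}}(s)=\dot u(-s)$ restores the original derivative). Hence the transformed integrand coincides with the integrand of $\LL^{\star}$, yielding $\LL(u)=\LL^{\star}(u^{\star})$.

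Next I would verify that the three hypotheses are inherited by the dual problem. Regularity of $L$ in the right sense of Definition~\ref{def:regular} is equivalent to regularity of $L^{\star}$ in the left sense of \cite[Definition~3.1]{BourdinTorres} by Proposition~\ref{prop:reg} (with Proposition~\ref{prop:bounded} ensuring the operators involved are well defined on $\L^r$). Coercivity of $\LL$ on $\E$ is equivalent to coercivity of $\LL^{\star}$ on $\E^{\star}$ by Proposition~\ref{prop:coerciv}. Finally, fixing $s\in[-b,-a]$, the map $L^{\star}(\cdot,s)=L(x_1,x_2,-x_3,x_4,-s)$ is the composition of $L(\cdot,-s)$ with the linear bijection $(x_1,x_2,x_3,x_4)\mapsto(x_1,x_2,-x_3,x_4)$, so convexity of $L(\cdot,-s)$ on $(\R^d)^4$ — which holds by hypothesis since $-s\in[a,b]$ — transfers to convexity of $L^{\star}(\cdot,s)$.

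With all three hypotheses established for the left functional $\LL^{\star}$ on the weakly closed set $\E^{\star}\subset\W^{1,p}(-b,-a;\R^d)$, I would apply the Tonelli existence theorem of \cite{BourdinTorres} to obtain a minimizer $u_0^{\star}\in\E^{\star}$ of $\LL^{\star}$. Setting $u_0:=(u_0^{\star})^{\star}$, Remark~\ref{rem:dual:dual} gives $u_0\in\E$; and since the dual map $u\mapsto u^{\star}$ is a bijection of $\E$ onto $\E^{\star}$ along which $\LL(u)=\LL^{\star}(u^{\star})$, the point $u_0$ minimizes $\LL$ over $\E$, which is the desired conclusion.

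The one place where genuine care is needed — and the main obstacle — is the claim that the dual map is a \emph{structure-preserving} bijection $\E\to\E^{\star}$: one must confirm that $u\mapsto u^{\star}$ carries $\W^{1,p}(a,b;\R^d)$ onto $\W^{1,p}(-b,-a;\R^d)$, preserves the $\W^{1,p}$-topology so that weak closedness of $\E$ passes to weak closedness of $\E^{\star}$ (exactly the admissibility required by \cite{BourdinTorres}), and is compatible with the dual space $\E^{\star}$ of Definition~\ref{def:DualVarFunc}. This amounts to the observation that the reflection $t\mapsto-t$ is an isometric change of variables; once it is granted, the remaining steps are the bookkeeping already packaged in Propositions~\ref{prop:reg}, \ref{prop:coerciv} and \ref{prop:bounded}, and the theorem follows with no new analysis.
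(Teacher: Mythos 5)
Your proposal is correct and follows exactly the route the paper intends: the paper's own proof is the one-line statement that the result ``follows from Propositions~\ref{prop:reg}, \ref{prop:coerciv} and \ref{prop:bounded} and \cite[Theorem 3.3]{BourdinTorres},'' and what you have written is precisely the detailed unpacking of that reduction --- the identity $\LL(u)=\LL^{\star}(u^{\star})$ via $s=-t$, the transfer of regularity, coercivity and convexity to the dual left problem, and the pull-back of the minimizer. Your explicit verification that convexity survives the linear substitution $(x_1,x_2,x_3,x_4)\mapsto(x_1,x_2,-x_3,x_4)$ and that the reflection preserves the $\W^{1,p}$ structure and weak closedness is a welcome addition the paper leaves implicit.
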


\begin{proof}
Follows from Propositions~\ref{prop:reg}, \ref{prop:coerciv} and \ref{prop:bounded}
and \cite[Theorem 3.3]{BourdinTorres}.
\end{proof}


\section{Conclusion}
\label{sec:conc}

In this work we developed further the theory of fractional calculus that has the definition
of two fractional derivative/intregral operators as its base: a left operator,
which is nonlocal by looking to the past/left of the current time/space,
and a right operator, which is nonlocal by looking to the future/right
of the current instant/position.
Both perspectives (left and right, causal and anti-causal)
make all sense in many applications, like Signal Processing,
where bilateral operators, like the bilateral Laplace transform,
and right and left functions have a central role \cite{M:O:P:T}.
We trust that the fractional signal processing community will gain new interested
people and application areas with our results because we develop a new set of simple tools
that allows to substitute the usual procedure of repeating the arguments
for both left and right cases by simple duality,
deducing directly one of the situations from the other.
Since the involved mathematical analysis turns out to be simpler and less involved
with our duality technique, it is natural to believe in the success of our new fractional methodology
in applications, where it can allow simpler and thus better models.
More precisely, the new way we propose to look to a right fractional operator,
as the dual of the corresponding left operator, may bring new ways
of dealing with practical systems. For future work,
it would be interesting to generalize our results
to other classes of fractional operators like Gr\"unwald--Letnikov
derivatives \cite{MR2806728} or discrete fractional operators
like those of \cite{MyID:179}. Other direction of research
concerns the implications of duality in the expressions
for the transfer functions and their regions of convergence.
This is under investigation and will be addressed elsewhere.


\section*{Acknowledgements}

This work was partially supported by Portuguese funds through the
\emph{Center for Research and Development in Mathematics and Applications} (CIDMA),
and \emph{The Portuguese Foundation for Science and Technology} (FCT),
within project PEst-OE/MAT/UI4106/2014.
Torres was also supported by the FCT project
PTDC/EEI-AUT/1450/2012, co-financed by FEDER under POFC-QREN
with COMPETE reference FCOMP-01-0124-FEDER-028894.
The authors are very grateful to four anonymous referees
for valuable remarks and comments, which
significantly contributed to the quality of the paper.



\end{document}